\numberwithin{equation}{section}
\theoremstyle{plain}
\newtheorem{thrm}{Theorem}
\newtheorem{lemma}[thrm]{Lemma}
\newtheorem{cor}[thrm]{Corollary}
\newtheorem{prop}[thrm]{Proposition}
\theoremstyle{definition}
\newcommand{\R}{\mathbb{R}}
\renewcommand{\Re}{\mathbb{R}}
\newcommand{\Red}{\R^d}
\newcommand{\aff}[1]{\mathrm{aff}\!\left(#1\right)}
\newcommand{\relint}[1]{\mathrm{relint}\!\left(#1\right)}
\newcommand{\inter}[1]{\mathrm{int}\!\left(#1\right)}
\newcommand{\conv}[1]{\mathrm{conv}\!\left(#1\right)}
\newcommand{\vol}[1]{\mathrm{vol}\!\left(#1\right)}
\newcommand{\st}{\;\colon\;}
  \mathchardef\mathcomma\mathcode`\,
\begin{document}

\title[Higher rank antipodality]{Higher rank antipodality}

\author[M. Nasz\'odi]{M\'arton Nasz\'odi}
\address{
Alfr\'ed R\'enyi Institute of Mathematics HUN-REN,  Budapest, Re\'altanoda u.\ 13-15, 1053 Hungary and\\
Department of Geometry, E\"{o}tv\"{o}s Lor\'{a}nd University,
P\'{a}zm\'{a}ny P\'{e}ter s\'{e}t\'{a}ny 1/C, Budapest, 1117 Hungary}
\email{marton.naszodi@renyi.hu}

\author[Zs.\ Szil\'agyi]{Zsombor Szil\'agyi}
\address{
Department of Analysis and Operations Research, Institute of Mathematics,
Budapest University of Technology and Economics M\H{u}egyetem rkp. 3--9 H-1111 
and MTA-BME Lend\"ulet ``Momentum'' Quantum Information Theory Research Group}
\email{zsombor.szilagyi@math.bme.hu}

\author[M.\ Weiner]{Mih\'aly Weiner}
\address{
Department of Analysis and Operations Research, Institute of Mathematics,
Budapest University of Technology and Economics M\H{u}egyetem rkp. 3--9 H-1111 
and MTA-BME Lend\"ulet ``Momentum'' Quantum Information Theory Research Group}
\email{mweiner@math.bme.hu}

\thanks{M.N. and M.W. were supported by the Bolyai J\'anos Fellowship of the Hungarian Academy of Sciences and the \'UNKP-22-5 New National Excellence Program of the Ministry for Innovation and Technology. 
N.M. is also supported by the NRDI grants K131529 and K147544, and the ELTE TKP 2021-NKTA-62 funding
scheme, as well as the ERC grant "Geoscape" no. 882971. M.W. is also supported by the NRDI grant K132097 and Sz.Zs. and M.W. by the Ministry of Culture and Innovation and the National Research, Development and Innovation Office within the Quantum Information National Laboratory of Hungary (Grant No. 2022-2.1.1-NL-2022-00004).}

 \subjclass[2020]{52C17}

 \keywords{Antipodality, general probability, hash function, neighborly polytope}

\begin{abstract}
Motivated by general probability theory, we say that the set $S$ in $\mathbb{R}^d$ is \emph{antipodal of rank $k$}, if for any $k+1$ elements $q_1,\ldots q_{k+1}\in S$, there is an affine map from $\conv S$ to the $k$-dimensional simplex $\Delta_k$ that maps $q_1,\ldots q_{k+1}$ bijectively onto the $k+1$ vertices of $\Delta_k$. For $k=1$, it coincides with the well-studied notion of (pairwise) antipodality introduced by Klee.
We consider the following natural generalization of Klee's problem on antipodal sets: What is the maximum size of an antipodal set of rank $k$ in $\mathbb{R}^d$? We present a geometric characterization of antipodal sets of rank $k$ and adapting the argument of Danzer and Gr\"unbaum originally developed for the $k=1$ case, we prove an upper bound which is exponential in the dimension. We show that this problem can be
connected to a classical question in computer science on finding perfect hashes, and it provides a lower bound on the maximum size, which is also exponential in the dimension. 
By connecting rank-$k$ antipodality to $k$-neighborly polytopes, we obtain another upper bound when $k>d/2$. 
\end{abstract}

\maketitle

\section{Introduction}

As introduced by Klee \cite{Klee1960}, we say that two points $q_1,q_2$ of a convex set $S$ in $\Red$ are in \emph{antipodal position} if there exist two distinct parallel hyperplanes $L_1,L_2$ supporting $S$ such that $q_1\in L_1$ and $q_2\in L_2$. Danzer and Gr\"unbaum \cite{DG1962} proved the sharp upper bound $2^d$ for the maximum size of a set $S$ in $\Red$ which is antipodal with respect to $\conv{S}$, giving also a characterization of the equality case ($S$ needs to be the vertex set of a parallelotope). In a number of variants of this notion (strict antipodality, Erd{\H o}s' notion of an obtuse-triangle-free set, see below), a stronger/weaker condition is imposed; however, in all of these, it is a
property stated for a \emph{pair} of points. Here, we consider a natural generalization to the joint position of $k\geq 2$ points of a given convex set. Our motivation for this generalization comes from general probability theory (GPT), where the notion of \emph{joint distinguishability of states} -- as was observed in \cite{LGA2022} -- for two states simplifies to that of pairwise antipodality. 

We explain this background in Section~\ref{sec:gpt} and in particular, how it leads to the following definition. For a set $S$ in $\Red$, we say that its points $q_1,\dots,q_{k+1}\in S$ are \emph{jointly antipodal with respect to $S$}, if there exists an affine map from $S$ to the $k$-dimensional simplex $\Delta_k=\left\{x\in\left(\Re_{\geq 0}\right)^{k+1}\st \sum_{i=1}^{k+1}=1\right\}$ that maps $q_1,\ldots q_{k+1}$ bijectively onto the $k+1$ vertices of $\Delta_k$. We call a set $S$ in $\Red$ \emph{antipodal of rank $k$}, if any $k+1$ points of $S$ are jointly antipodal with respect to $S$.

In Section~\ref{sec:upperbound}, we first provide a geometric characterization of joint antipodality. Let $S$ be a convex set and $q_1,\dots, q_{k+1}\in S$.
For each $j\in[k+1]=\{1,\dots, k+1\}$, dilate $S$ from center $q_j$ by factor $\lambda_j\in (0,1)$ to obtain the ``shrunk copy'' $S_j$. In Proposition~\ref{prop:antipodequivshrinking},
we use the sets $S_j$ ($j\in[k+1]$)
to characterize antipodality; in particular, we show that $q_1,\dots,q_{k+1}\in S$ are jointly antipodal with respect to $S$ if and only if, whenever the dilation factors satisfy $\lambda_1+\ldots +\lambda_{k+1}<k$, the intersection $\cap_{j=1}^{k+1}S_j$ is empty. Using this characterization, we obtain the following upper bound on the maximum size, $A(d,k)$, of a rank $k$ antipodal set in $\Red$.
\begin{thrm}[A general upper bound on the size of a rank $k$ antipodal set]\label{thm:upperbound}
For any $k\leq d$, we have $A(d,k) \leq k \left(\frac{k+1}{k}\right)^d$. 
\end{thrm}
Note that for $k>k^\prime$, rank $k$ antipodality is stronger than rank $k^\prime$ antipodality and in fact,
our upper bound decreases as $k$ increases from $1$ to $d-1$. Clearly, a rank $k$ antipodal set is of dimension at least $k$, and in case $d=k$, the set must be a subset of the $k+1$ vertices of a $k$-simplex. Note also that for $k=1$, Theorem~\ref{thm:upperbound} restates the bound of Danzer and Gr\"unbaum \cite{DG1962}; in fact, our argument is an adaptation of their idea with additional care.

Next, in Section~\ref{sec:neighborly}, we establish a connection of rank $k$ antipodality with $k$-neighborly polytopes, which will yield the following upper bound.

\begin{thrm}[Upper bound on $A(d,k)$ using $k$-neighborly polytopes]\label{thm:neighborlybound}
If $k>d/2$, then $A(d,k)=d+1$.
\end{thrm}

Finally, in Section~\ref{sec:combinatorics}, we consider constructions yielding rank $k$ antipodal sets, and in turn obtain 
lower bounds on their maximum size. We follow and expand the basic idea of the recent paper \cite{LGA2022}, which is assigning points to sequences in a suitable Cartesian product space. We note that the underlying combinatorial problem is in fact
a well-known question of computer science, namely, the construction of perfect hashes.  

For positive integers $b,k,m$ with $2<k\leq b$, a \emph{perfect $(b,k)$-hash code of length $m$} is a set $W$ of words of length $m$ on the alphabet $[b]=\{1,2,\dots,b\}$ in which for every subset $\{w_1,\dots,w_k\}$ of $k$ elements of $W$, there is a $j\in[m]$ such that the $j^{\rm th}$ letters of the words $w_1,\dots,w_k$ are all different. 
Let $N(b,k,m)$ denote the size of the largest perfect $(b,k)$-hash code of length $m$. A long-studied problem of computer science \cite{FC84, KoMa88, DCD22, XiYu21} is the determination of $N(b,k,m)$, or at least
-- since we know that for fixed $b,k$ it grows exponentially with the length $m$ -- its 
asymptotic rate, i.e.\ the quantity
\begin{equation}
\label{eq:R}
R(b,k)= \limsup_{m\to \infty}\frac{1}{m}\log (N(b,k,m)).
\end{equation}
Since in general, only lower and upper bounds are known for these quantities, 
we leave them in their ``unevaluated'' form and state that the mentioned construction yields the following lower bound for our geometric quantity $A(d,k)$.
\begin{thrm}[Hashes yield rank $k$ antipodal sets]\label{thm:combimpliesgeo}
Assume that there is a rank $k$ antipodal set in dimension $d_0$ of size $b$. Then for every $m=1,2,\ldots$ one can construct a rank $k$ antipodal set in dimension $d=m\cdot d_0$ of size $N(b,k+1,m)$.
\end{thrm}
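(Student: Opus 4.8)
The plan is to encode the combinatorial hash structure into a block-coordinate construction over the product space. Let $Y=\{y_1,\dots,y_b\}\subset\R^{d_0}$ be the given rank $k$ antipodal set, and identify $\R^{m d_0}$ with the $m$-fold product $(\R^{d_0})^m$. To each word $w=(w_1,\dots,w_m)\in[b]^m$ I would associate the point
$$\phi(w)=\bigl(y_{w_1},\dots,y_{w_m}\bigr)\in(\R^{d_0})^m=\R^{m d_0},$$
that is, read the $j$-th letter $w_j\in[b]$ and place the corresponding point $y_{w_j}$ of $Y$ into the $j$-th block. Then I would take $W\subseteq[b]^m$ to be a perfect $(b,k+1)$-hash code of length $m$ of maximal size $N(b,k+1,m)$ and set $X=\phi(W)$. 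Since the points $y_1,\dots,y_b$ are distinct, $\phi$ is injective (two distinct words differ in some letter, hence in the corresponding block), so $|X|=|W|=N(b,k+1,m)$, which gives the claimed size.

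The substance of the argument is verifying that $X$ is antipodal of rank $k$. First I would fix any $k+1$ distinct points of $X$, say $\phi(w^{(1)}),\dots,\phi(w^{(k+1)})$ with $w^{(1)},\dots,w^{(k+1)}\in W$. The defining property of a perfect $(b,k+1)$-hash code then supplies a coordinate $j\in[m]$ at which the letters $w^{(1)}_j,\dots,w^{(k+1)}_j$ are pairwise distinct elements of $[b]$; consequently $y_{w^{(1)}_j},\dots,y_{w^{(k+1)}_j}$ are $k+1$ distinct points of $Y$. Because $Y$ is rank $k$ antipodal, these points are jointly antipodal with respect to $Y$, so there is an affine map $g\colon\R^{d_0}\to\aff{\Delta_k}$ with $g(\conv{Y})\subseteq\Delta_k$ that sends $y_{w^{(1)}_j},\dots,y_{w^{(k+1)}_j}$ onto the $k+1$ vertices of $\Delta_k$.

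Next I would lift $g$ to the product by precomposing with the $j$-th block projection $\pi_j\colon(\R^{d_0})^m\to\R^{d_0}$, which is affine and satisfies $\pi_j(\phi(w))=y_{w_j}$. The composite $f=g\circ\pi_j\colon\R^{m d_0}\to\aff{\Delta_k}$ is affine, and for every $w\in W$ we have $f(\phi(w))=g(y_{w_j})\in\Delta_k$, since $w_j\in[b]$ and $g(Y)\subseteq\Delta_k$; hence $f(X)\subseteq\Delta_k$, and by affinity $f(\conv{X})=\conv{f(X)}\subseteq\Delta_k$. Moreover $f(\phi(w^{(i)}))=g(y_{w^{(i)}_j})$ runs over the $k+1$ vertices of $\Delta_k$ as $i$ ranges over $[k+1]$, so $f$ witnesses the joint antipodality of the chosen $k+1$ points with respect to $X$. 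As these points were arbitrary, $X$ is rank $k$ antipodal.

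The construction itself is transparent, so I do not expect a serious obstacle; the crux is the interplay between the two "levels", which I would state carefully. The hash property is used precisely to produce, for each $(k+1)$-subset of selected words, a single block in which the underlying points of $Y$ are all distinct, while rank $k$ antipodality of $Y$ is exactly the hypothesis that distinct points can be simultaneously mapped onto the simplex vertices by one affine map that keeps \emph{all} of $Y$ inside $\Delta_k$. The only point needing genuine care is that the lifted map $f$ keeps the entire set $X$—not merely the $k+1$ selected points—inside $\Delta_k$; this follows from $g(\conv{Y})\subseteq\Delta_k$ together with affinity of $\pi_j$, and the size count reduces to the injectivity of $\phi$, i.e.\ to the distinctness of the $y_i$.
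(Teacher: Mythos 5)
Your proof is correct and follows essentially the same route as the paper: embed each hash word $w$ as the point $(y_{w_1},\dots,y_{w_m})$ in the product $(\conv{Y})^m\subset\R^{md_0}$, and for any $k+1$ chosen points use the hash property to find a block where the letters are distinct, then compose the block projection with the affine map witnessing joint antipodality in $Y$. You merely spell out the details (injectivity of the encoding, the lifting $f=g\circ\pi_j$, and that $f(\conv{X})\subseteq\Delta_k$) that the paper leaves as ``easy to see.''
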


In essence, the same underlying construction is presented in \cite{LGA2022} using the vertices of a $d_0$-simplex as the ``starting configuration''. The aim in that paper was only to prove that for any fixed $k$, here exist rank $k$ antipodal sets whose size grows
exponentially with the dimension; see \cite[Theorem~17]{LGA2022}. We point out the fact that the combinatorial problem is well-studied in computer science, and improve the results in \cite{LGA2022} in three ways. First, even when using a simplex for a starting configuration  (which is what they do), one could ``optimize'' for its dimension $d_0$ (which they did not attempt). 

For example, consider creating antipodal sets of rank $k=2$ using the above construction starting from the simplex of dimension $d_0\geq 2$. Using the lower bounds in \cite{KoMa88}, we then find that
$$
\liminf_{d\to \infty}\frac{1}{d}\log A(d,2) 
\geq \frac{1}{2d_0} \log\frac{(d_0+1)^2}{3d_0+1}.
$$
Standard calculus arguments show that the right hand side of the above inequality is the largest (among integers greater than 1) for $d_0=4$, showing that asymptotically, $A(d,2)$ grows at least as $1.085^d$. 

Second, the simplex may be far from being an optimal starting configuration. For example, when $k=2$, then neither Theorem \ref{thm:upperbound}
nor Theorem \ref{thm:neighborlybound} does not exclude the existence of a rank $2$ antipodal set in dimension $d_0=4$ of size $6$ (i.e.\! larger than the number of vertices of the four dimensional simplex), with which we would of course ``beat'' the previously mentioned growing rate of $1.085^d$.
Third, there are better random constructions \cite{KoMa88} for the underlying combinatorial problem and in fact, constructions beating the known random ones \cite{XiYu21} in their exponential rates of growth.

In this way, various lower bounds of the form $c \cdot \alpha^d$ can be given for the possible size of a rank $k$ antipodal set in dimension $d$ with both the base $\alpha>1$ and the multiplicative constant $c>0$ depending on $k$. However, as we shall note, for $k>2$, this construction \emph{cannot} give a lower bound with $\alpha=(k+1)/k$, -- which is what appears in our upper bound Theorem~\ref{thm:upperbound} -- regardless of any later improvements on the lower bound of $R(b,k)$. Thus, a gap remains between our upper bound, Theorem~\ref{thm:upperbound} on the geometric quantity $A(d,k)$, and the constructions that one can obtain using perfect hashes, a combinatorial notion. The gap exist perhaps because there are better -- e.g.\ fundamentally geometric -- constructions of rank $k$ antipodal sets, or because our upper bound is not optimal. In the remaining part of this introduction, we pose some problems.

\subsection{Questions}

The first, obvious question is to close the gap between the lower and the upper bounds on the maximum size $A(d,k)$ of an antipodal set of rank $k$ in $\Red$ for $k>1$. We found no reason to believe that either the lower or the upper bound that we present should be even asymptotically close to optimal. It would be very interesting to find out whether the answer to the geometric problem (the maximum size of a rank $k$ antipodal set) and the answer to the combinatorial problem (size of a perfect hash) are of the same or of different orders of magnitude.

We defined antipodality of rank $k$ following Klee, however, one could generalize a strongly related question of Erd{\H o}s \cite{E1957}, where the maximum size of a set in $\Red$ is to be determined with the property that it does not contain the vertices of an obtuse triangle. In other words, for any pair of points $q_1,q_2$ of the set, the two hyperplanes perpendicular to the line segment $[q_1,q_2]$, one through each end point, support the set. Generalizing this definition, we may call a set $S$ of points in $\Red$ \emph{Erd{\H o}s-antipodal of rank $k$}, if for any $k+1$ elements $q_1,\dots,q_{k+1}\in S$ the orthogonal projection of $\Red$ onto the $k$-dimensional affine hull $\aff{q_1,\dots,q_{k+1}}$ maps $S$ to the simplex $\conv{q_1,\dots,q_{k+1}}$. One may investigate the maximum size of such a set in $\Red$. For $k=1$, we know that Klee's notion of antipodality, and Erd{\H o}s' stronger notion yield the same bound, $2^d$, as shown by Danzer and Gr\"unbaum \cite{DG1962} (and the example of the cube).

Finally, \emph{strictly antipodal sets}, as introduced by Gr\"unbaum \cite{Gru1963}, are those antipodal sets in $\Red$ where for any pair of points $q_1$ and $q_2$ of the set, the pair of distinct parallel hyperplanes may be chosen in such a way that each intersects the set in exactly one point, namely in $q_1$ and $q_2$ respectively; cf. \cite{Za2019, GH2019} for recent results and \cite{GH2019s} for a survey.
By analogy, we may call a set $S$ \emph{strictly antipodal of rank $k$}, if for any $k+1$ elements $q_1,\dots,q_{k+1}\in S$ there is a projection of $\Red$ onto a $k$-dimensional plane, and there is a $k$-dimensional simplex $\Delta_k$ in this plane such that $S$ is mapped to $\Delta_k$, the set $\{q_1,\dots,q_{k+1}\}$ is mapped bijectively onto the vertices of $\Delta_k$, and the image of no other point of $S$ is a vertex of $\Delta_k$. Lower and upper bounds for the maximum size of such set in $\Red$ would be of interest.

\section{Motivation: General Probability Theory}\label{sec:gpt}

General probability theory (GPT) was developed
to provide a common framework to classical
and quantum probabilistic models and to
explore what options (compatible
with some natural physical requirements)
could exists beyond quantum probability, see e.g.\ \cite{gptbackground1,gptbackground2}.
In this section, we  give a condensed, simplified
version of what a GPT model is, concentrating only on 
the concepts of \emph{states} and \emph{measurements} (and omitting many other notions, eg. effects, composite systems, etc.).

\subsection{A brief introduction to GPT}

From a probabilistic point of view, to describe a physical system, first, one needs to choose a set 
$S$ which will be viewed as the set of possible \emph{states}. Next, we have to introduce the concept of  
a measurement with finitely many, say $k+1$, possible outcomes identified by / corresponding to the numbers $1,2,\dots, k+1$. What we want is to be able to
talk about the outcome-statistics; i.e.\ the probabilities of obtaining outcomes $1,2,\dots, k+1$, or collected into a $k+1$-tuple, an element of the standard simplex $\Delta_k=\{(p_1,\ldots p_{k+1})\in \mathbb R^{k+1} \st \, \sum_{j}p_j=1, \, p_j\geq 0 \text{ for all } j\}$. Since this statistics may depend on the actual state, in our model a measurement with $k+1$ numbered possible outcomes is a function from the set of states $S$ to $\Delta_k$. Thus the model should minimally consists of a set $S$ and a collection of functions from $S$ to some (possibly distinct dimensional) simplices corresponding to the measurements that can be performed on the system.

An important point is that physically it is meaningful to consider the convex combination of states. Operationally, to obtain outcome statistics of a measurement $M$ in the ``mixed state'' $\lambda s_1 +(1-\lambda)s_2$ where $s_1,s_2\in S$ and the coefficients $\lambda$ and $1-\lambda$ are in $[0,1]$, one prepares the system in either state $s_1$ or $s_2$ with corresponding probabilities $\lambda$ and $1-\lambda$, then performs $M$.
Thus, the state space should be endowed with a structure making it a \emph{convex set} and moreover, by this operational meaning of convex combinations of states, every measurement should be an affine map from $S$ to a simplex.

Usually, when defining what a GPT model is, one assumes the \emph{no restriction principle} \cite{norestr}: every ``theoretically possible measurement'', that is, every affine map from $S$ to a simplex, is actually a realizable measurement. So if we only wish to talk about states and measurements, then under this assumption, the whole probability model is completely determined by the choice of the convex set $S$ playing the role of the state space of the physical system. This principle has physical motivations, and it holds in both the classical (where in the ``finite level'' we set $S=\Delta_n$ for some $n$) and in the quantum case (where in the finite level, $S$ is the set of density operators on $\mathbb C^n$ for some $n$). So in what follows, we shall always assume it when considering a GPT model.

\subsection{Joint distinguishability and joint antipodality}

Assume that a physical system's state space is modeled by the convex set $S$ and let 
$s_1,s_2,\ldots s_{k+1}$ be some specific elements of $S$. Aiming to store 
information (to use the system as a memory device),
the system is put into state $s_j$ according
to some selected value $j=x\in[k+1]$. To retrieve the value of $x$, we need
to establish if the system is in state $s_1$, $s_2\ldots$ or in $s_{k+1}$.

In order to distinguish, we perform an appropriate measurement with $k+1$ possible outcomes and take its outcome as our hypothesis for the value of $x$. Suppose this measurement is modeled by the affine map $M:S\to \Delta_k$. Then, given that $x=j$, the probability that the measurement will end with its $j^{\rm th}$ outcome (i.e.\ it correctly indicates the value of $x$) is $M_j(s_j)$, and it is precisely $1$ if and only if $M(s_j)$ is the $j^{\rm th}$ vertex
of $\Delta_k$. If there exists an affine map $M:S\to \Delta_k$ (i.e.\ a measurement) such that this holds for every $j\in[k+1]$, then $s_1,s_2,\ldots s_{k+1}$ are {\it jointly (perfectly) distinguishable}. 
Note that this is equivalent to the existence of an affine map $M$ from $S$ to {\it some} simplex such that $M(s_1), M(s_2),\ldots M(s_{k+1})$ are 
$k+1$ distinct vertices of the simplex in question, which is what we adopted as a definition of joint antipodality.

As pointed out in \cite{LGA2022}, small dimensional rank $k$ antipodal sets with large cardinality are interesting from an information theoretic point of view. Assume that we want to build a memory device which may store an integer $x$, say, between 1 and 1024. Assume further that we do not need to be able to retrieve the value $x$. Instead, for any three-element subset $H$ of $[1024]$ that contains $x$, we want to be able to tell which element of $H$ $x$ is. If the set of states of our memory device is the convex set $S$, then according to the argument above, in the classical (when $S$ is a simplex) and in the quantum (when $S$ is the set of density operators on $\mathbb C^n$ for some $n$) settings, we need 10 bits, or, in geometric terms, we need a least $(2^{10}-1)$-dimensional space to embed $S$ in. However, if $S$ is a smaller dimensional rank $3$ antipodal polytope with at least 1024 vertices, then we have a smaller memory performing the job. In other words, $S$ makes a certain kind of \emph{data compression} possible.

\section{Equivalent descriptions of joint antipodality and volume-bounds}\label{sec:upperbound}

Given a point $q\in \Red$ and $\lambda\in \R$, we denote the \emph{dilation} from center $q$ by factor 
$\lambda$ by $D_{q,\lambda}$, that is,
\[
D_{q,\lambda}(x) = (1-\lambda) q + \lambda x,\;\;\; (x\in \Red).
\]
We denote by $\relint{S}$ the \emph{relative interior} of a set $S\subset \Red$; i.e.\ the interior of $S$ with respect to the affine hull $\aff{S}$ of $S$.

\begin{prop}[Characterization of joint antipodality in terms of dilates]\label{prop:antipodequivshrinking}
Let $S$ be a convex set in $\Red$ and $q_1,\ldots q_{k+1}\in S$. Then the following are equivalent.
\begin{enumerate}
    \item\label{item:antipod} 
$q_1,\ldots q_{k+1}$ are jointly antipodal with respect to $S$;
    \item\label{item:separatedall}
for any $\lambda_1,\ldots \lambda_{k+1}\in(0,1)$ with $\lambda_1+\ldots +\lambda_{k+1}=k$, we have 
\[
\mathop{\cap}_{j=1}^{k+1} D_{q_j,\lambda_j}(\relint{S}) = \emptyset;
\] 
    \item\label{item:separated}
for some $\lambda_1,\ldots \lambda_{k+1}\in(0,1)$ with $\lambda_1+\ldots +\lambda_{k+1}=k$, we have
\[
\mathop{\cap}_{j=1}^{k+1} D_{q_j,\lambda_j}(\relint{S}) = \emptyset.
\] 
\end{enumerate}
\end{prop}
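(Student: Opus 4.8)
The plan is to establish the cycle $(\ref{item:antipod})\Rightarrow(\ref{item:separatedall})\Rightarrow(\ref{item:separated})\Rightarrow(\ref{item:antipod})$. For $(\ref{item:antipod})\Rightarrow(\ref{item:separatedall})$ I would argue by contradiction with a simple averaging inequality. Assume an affine map $M=(M_1,\dots,M_{k+1})\colon S\to\Delta_k$ with $M_i(q_j)=\delta_{ij}$ and $\sum_i M_i\equiv 1$ realizes joint antipodality, and suppose some $y$ lies in $\bigcap_{j}D_{q_j,\lambda_j}(\relint{S})$ with $\lambda_1+\dots+\lambda_{k+1}=k$. Writing $y=(1-\lambda_j)q_j+\lambda_j x_j$ with $x_j\in\relint{S}$ and applying $M_j$ gives $M_j(y)=(1-\lambda_j)+\lambda_j M_j(x_j)$. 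Since $M_j\geq 0$ on $S$ and $M_j(q_j)=1$, the function $M_j$ is not identically zero, so its zero set meets $S$ in a proper face; as $\relint{S}$ avoids every proper face, $M_j(x_j)>0$ and hence $M_j(y)>1-\lambda_j$. Summing over $j$ and using $\sum_j(1-\lambda_j)=(k+1)-k=1$ yields $\sum_j M_j(y)>1$, contradicting $\sum_j M_j\equiv 1$ (note that $y\in\aff{S}$). Thus the intersection is empty. The implication $(\ref{item:separatedall})\Rightarrow(\ref{item:separated})$ is immediate, since admissible factors exist, e.g.\ $\lambda_j=k/(k+1)$ for all $j$.

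The substance is $(\ref{item:separated})\Rightarrow(\ref{item:antipod})$, which I would prove in contrapositive form: if $q_1,\dots,q_{k+1}$ are \emph{not} jointly antipodal, then $\bigcap_j D_{q_j,\lambda_j}(\relint{S})\neq\emptyset$ for \emph{every} admissible choice of factors, in particular for the one in $(\ref{item:separated})$. The starting point is that, setting $\al_j=1-\lambda_j$ (so $\al_j\in(0,1)$ and $\sum_j\al_j=1$), the $\al$-weighted centroid $g=\sum_j \al_j q_j$ always lies in $\bigcap_j \overline{D_{q_j,\lambda_j}(\relint{S})}$: indeed $x_j^0:=\frac{g-\al_j q_j}{\lambda_j}=\sum_{i\neq j}\frac{\al_i}{\lambda_j}q_i$ is a convex combination of $\{q_i\st i\neq j\}\subseteq S$. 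Consequently the intersection of the open dilated bodies is nonempty as soon as $g$ admits a common \emph{inward perturbation}: a single direction $u$ with $x_j^0+tu\in\relint{S}$ for small $t>0$ and all $j$ at once. Writing $I(p)=\{u\st p+tu\in\relint{S}\text{ for some }t>0\}$ for the relatively open convex cone of such directions, the task reduces to proving that non-antipodality forces $\bigcap_j I(x_j^0)\neq\emptyset$.

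To finish I would separate. If instead $\bigcap_j I(x_j^0)=\emptyset$, a Gordan-type alternative for the open convex cones $I(x_j^0)$ produces functionals $\xi_j$, not all zero, with each $\xi_j$ attaining its minimum over $S$ at $x_j^0$ and with $\sum_j\xi_j=0$. Because $x_j^0$ is a convex combination of $\{q_i\st i\neq j\}$ with strictly positive weights and $\xi_j$ is minimized over $S$ at $x_j^0$, every $q_i$ with $i\neq j$ must also attain this minimum, so $\langle\xi_j,q_i-x_j^0\rangle=0$ for $i\neq j$. From suitably normalized multiples $M_j(x)=a_j\langle\xi_j,x-x_j^0\rangle$ one then recovers affine functions with $M_j\geq 0$ on $S$, $M_j(q_i)=\delta_{ij}$ and $\sum_j M_j\equiv 1$, i.e.\ a certificate of joint antipodality. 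Equivalently, and more transparently, one may phrase antipodality as the feasibility of choosing the transverse gradients $c_i$ of the barycentric coordinates with $\sum_i c_i=0$ keeping each affine extension nonnegative on $S$ --- that is, as $0$ lying in a Minkowski sum $\sum_i \mathcal C_i$ of convex sets --- and separate $0$ from this sum when antipodality fails.

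The main obstacle is precisely this last step. For $k+1\geq 3$ the cones $I(x_j^0)$ (equivalently the dilated bodies) can have empty intersection without being pairwise disjoint, so no single separating hyperplane is available and one must invoke a genuine theorem of the alternative for several cones, rather than the two-set separation that suffices in the classical $k=1$ case of Danzer and Gr\"unbaum. The delicate point is the \emph{calibration} of the functionals: the alternative only guarantees $\sum_j\xi_j=0$, whereas the barycentric normalization demands that the rescaled $\xi_j/\langle\xi_j,q_j-x_j^0\rangle$ sum to zero, so the scales $a_j$ must be pinned down compatibly; this is exactly what the $\mathcal C_i$-formulation --- which builds $M_i(q_j)=\delta_{ij}$ and $\sum_i M_i\equiv 1$ into the feasible set --- is designed to enforce. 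Finally I would dispatch the degenerate configurations separately: if $q_1,\dots,q_{k+1}$ are affinely dependent they are trivially not jointly antipodal and the dilated bodies are seen to intersect directly, and throughout one must keep the usual care with relative interiors and with the closedness required for strong separation.
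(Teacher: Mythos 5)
Your first two implications are fine: the averaging argument for \eqref{item:antipod}$\Rightarrow$\eqref{item:separatedall} (strict positivity of each $M_j$ on $\relint{S}$, then summing $M_j(y)>1-\lambda_j$ against $\sum_j(1-\lambda_j)=1$) is correct, and is essentially the dual form of what the paper does by pushing the dilated copies forward to $\Delta_k$; and \eqref{item:separatedall}$\Rightarrow$\eqref{item:separated} is trivial. The genuine gap is in \eqref{item:separated}$\Rightarrow$\eqref{item:antipod}, and it is exactly the step you yourself flag as ``the main obstacle'': the calibration of the functionals is not a loose end to be tidied up, it is the entire content of this implication, and your proposal contains no proof of it. What a Dubovitskii--Milyutin/Gordan-type alternative gives you is functionals $\xi_j$, \emph{not all zero}, with $\sum_j\xi_j=0$, each supporting $S$ at $x_j^0$, whence $\langle\xi_j,q_i-x_j^0\rangle=0$ for $i\neq j$. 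To certify antipodality you need affine functions $M_j\geq 0$ on $S$ with $M_j(q_i)=\delta_{ij}$ \emph{and} $\sum_j M_j\equiv 1$, and nothing guarantees this can be reached by rescaling the $\xi_j$: some $\xi_j$ may be the zero functional, or may satisfy $\langle\xi_j,q_j-x_j^0\rangle=0$ (the alternative only gives $\geq 0$), in which case the normalization $M_j(q_j)=1$ is impossible for that $j$; and even when all the scales $a_j=\langle\xi_j,q_j-x_j^0\rangle^{-1}$ exist, $\sum_j a_j\xi_j=0$ is a different linear relation from $\sum_j\xi_j=0$ and need not hold. Your one-sentence repair (``$0$ in a Minkowski sum $\sum_i\mathcal{C}_i$, then separate'') never defines the sets $\mathcal{C}_i$, does not address the closedness you note is needed for strong separation, and as written restates the goal rather than proving it.

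For comparison, the paper closes precisely this step by a geometric rather than dual-algebraic move. It starts the same way you do: the common point $p=\sum_i(1-\lambda_i)q_i$ (your $g$); half-spaces $D_i\supseteq D_{q_i,\lambda_i}(S)$ with $p\in H_i=\partial D_i$ and $\cap_i\inter{D_i}=\emptyset$, from Lemma~\ref{lem:barany}; and the observation $A_i=\{q_i+\lambda_i(q_j-q_i)\st j\neq i\}\subset H_i$, which is your ``each $q_i$ with $i\neq j$ attains the minimum''. But then, instead of rescaling functionals, it applies the inverse dilations: each hyperplane $D_{q_i,1/\lambda_i}(H_i)$ supports $S$ at the points $q_j$, $j\neq i$; the linear dependence of the normals (the Gordan dual of $\cap_i\inter{D_i}=\emptyset$) forces $L=\cap_i D_{q_i,1/\lambda_i}(H_i)$ to be an affine subspace of dimension $d-k$; and the desired affine map is the projection of $\Red$ onto $\aff{q_1,\dots,q_{k+1}}$ along the direction of $L$. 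Since $S\subseteq\cap_i D_{q_i,1/\lambda_i}(D_i)$ and each of these half-spaces is invariant under translations parallel to $L$, this projection maps $S$ into $\conv{q_1,\dots,q_{k+1}}$, a $k$-simplex with the $q_j$ as vertices, so the constraint $\sum_j M_j\equiv 1$ is satisfied automatically by the barycentric coordinates of that simplex --- no calibration of separating functionals is ever performed. This projection-along-$L$ construction is the idea missing from your proposal; without it, or an actual proof of your Minkowski-sum separation claim, the implication \eqref{item:separated}$\Rightarrow$\eqref{item:antipod} remains unproved.
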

 
Note that we could avoid using relative interiors; e.g.\ condition \eqref{item:separatedall} is equivalent to saying that the intersection $\cap_{j=1}^{k+1} D_{q_j,\lambda_j}(S)$ is empty for any $\lambda_1,\ldots \lambda_{k+1}\in (0,1)$ such that $\lambda_1 + \ldots + \lambda_{k+1}<k$. We postpone the proof of the Proposition, and first show how this characterization implies Theorem~\ref{thm:upperbound}.

Proposition~\ref{prop:antipodequivshrinking} immediately yields the following.
\begin{cor}\label{cor:antipodequivshrinking}
    A set $S$ in $\Red$  is antipodal of rank $k$ if and only if, the sets $D_{x,k/(k+1)}(K)$ with $x \in S$, cover each point of $\Red$ at most $k$-fold.
\end{cor}

\begin{proof}[Proof of Theorem~\ref{thm:upperbound}]
Clearly, we may assume that $\aff{q_1,\dots,q_n}$ is $\Red$.
Set $S=\conv{q_1,\dots,q_n}$, and consider the sets $S_j=D_{q_j,k/(k+1)}(\inter{S})$.
Clearly, each $S_j$ is contained in $S$, and $\vol{S_j}=\left(\frac{k}{k+1}\right)^d\vol{S}$.
By Corollary~\ref{cor:antipodequivshrinking}, every point of $S$ is contained in at most $k$ of the $S_j$. Thus,
\[
 \sum_{j=1}^{n}\vol{S_j}\leq k\cdot \vol{S}.
\]
The statement of Theorem~\ref{thm:upperbound} follows.
\end{proof}
The main idea of the proof above, that is, to consider smaller dilates of $S$ inside $S$ and compute volume is due to Danzer and Gr\"unbaum \cite{DG1962}, our adaptation relies on the observation that it can be applied in the present context using an arrangement of dilates that do not cover any point more than  $k$-fold.

The proof of Proposition~\ref{prop:antipodequivshrinking} requires a lemma, which is a slightly modified version of \cite[Theorem~7.1]{barany15}. For completeness, we outline its proof.
\begin{lemma}[Separation of multiple convex sets]\label{lem:barany}
Let $K_1,\dots,K_r$ be convex bodies in $\R^k$, that is, compact convex sets with non-empty interior. Assume that $\cap_{i=1}^r\inter{K_i}=\emptyset$ and $p\in\cap_{i=1}^r K_i$. Then there are closed half-spaces $D_1,\dots,D_r$ containing $p$ on their boundary with \linebreak $\cap_{i=1}^r\inter{D_i} = \emptyset$ and $K_i\subset D_i$ for all $i\in[r]$. 
\end{lemma}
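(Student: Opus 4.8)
The plan is to reduce the statement to a separation argument for convex sets. The hypothesis $\cap_{i=1}^r \inter{K_i} = \emptyset$ says that the open convex sets $\inter{K_i}$ have no common point, and the point $p \in \cap_{i=1}^r K_i$ lies in each $K_i$ but, crucially, not in the intersection of their interiors. First I would invoke a Helly-type or separation principle for the family $\{\inter{K_i}\}$: since these are open convex sets with empty intersection, there is a way to ``separate'' them by a hyperplane through the common boundary point $p$. The cleanest route is to apply the separating hyperplane theorem iteratively or, better, to use a single strong separation statement in the spirit of \cite[Theorem~7.1]{barany15}: given convex bodies whose interiors have empty intersection and a common point $p$, one can find supporting half-spaces at $p$ whose interiors still fail to meet.

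Concretely, I would argue as follows. Consider the relatively open convex sets $\inter{K_i} \subset \R^k$. Their intersection is empty, so the family is ``separated'' in the sense that $p$ cannot be pushed into all interiors simultaneously. For each $i$, I want a closed half-space $D_i$ with $p \in \partial D_i$ and $K_i \subset D_i$, such that $\cap_i \inter{D_i} = \emptyset$. The key construction is to build the $D_i$ so that their inner normal directions, say $u_i$, satisfy a positive-combination relation forcing $\cap_i \inter{D_i} = \emptyset$; equivalently, $0$ lies in the convex hull of the $u_i$ (after suitable normalization), or more precisely the half-spaces $\{x : \langle u_i, x - p\rangle \geq 0\}$ have empty common interior exactly when the origin is in the convex hull of $\{u_i\}$. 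So the real content is: choose supporting hyperplanes $H_i$ at $p$ for each $K_i$ (these exist since $p$ lies on the boundary of the region where all interiors overlap) whose normals positively span a line, i.e.\ whose normals have $0$ in their convex hull.

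The main obstacle, and the heart of the proof, is guaranteeing that these supporting half-spaces can be chosen \emph{coherently} so that their open intersection is empty while each still contains its $K_i$. A single $K_i$ supported at $p$ gives a half-space containing it, but a priori the intersection of these half-spaces could still have nonempty interior. To force emptiness, I would set up an auxiliary convex separation problem: consider the point $p$ and the open sets $\inter{K_i}$, and apply the fact that if finitely many open convex sets have empty intersection then (by a compactness/Helly argument or by the structure theorem of \cite{barany15}) there exist hyperplanes through a common point witnessing this non-overlap. The technical care is in translating ``empty intersection of the interiors $K_i$'' into ``$0 \in \conv{\text{normals}}$'', which is precisely where one reproves the cited Theorem~7.1.

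My strategy for that step is to consider the normal directions of all supporting hyperplanes to the $K_i$ at $p$ and argue by contradiction: if no coherent choice existed, i.e.\ if the $0$ could not be written in the convex hull of any selection of outward normals, then by a hyperplane separation one could find a direction $v$ with $\langle u_i, v\rangle > 0$ for suitable supporting normals $u_i$ of every $K_i$, which would let one translate a point slightly in direction $v$ to lie in $\cap_i \inter{K_i}$, contradicting $\cap_i \inter{K_i}=\emptyset$. Making this rigorous requires carefully handling which supporting hyperplane to pick for each $K_i$ (the boundary $\partial K_i$ may be non-smooth at $p$, giving a cone of normals), but since $p \in K_i$ and we only need one supporting half-space per body, the flexibility in the normal cone is exactly what permits the coherent choice. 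Once the half-spaces $D_i$ are fixed with $0 \in \conv{\{u_i\}}$, the emptiness $\cap_i \inter{D_i} = \emptyset$ follows immediately, and $K_i \subset D_i$ holds by construction since $D_i$ supports $K_i$ at $p$.
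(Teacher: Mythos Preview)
Your approach is genuinely different from the paper's. The paper argues by \emph{iterative} separation: first separate $K_1$ from $\cap_{i\geq 2}K_i$ by a hyperplane through $p$, obtaining a half-space $D_1\supset K_1$ with $\inter{D_1}\cap\bigcap_{i\geq 2}K_i=\emptyset$; then replace $K_1$ by $D_1$ and separate $K_2$ from $D_1\cap\bigcap_{i\geq 3}K_i$; and so on. Each step is a single application of the standard two-set separation theorem, and the empty-interior-intersection property is preserved after every replacement. No normal cones appear, and the half-spaces are produced one at a time rather than chosen simultaneously.

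Your simultaneous normal-cone route can be completed, but the contradiction step as written has a gap. From ``for every selection $u_i\in N_{K_i}(p)$ we have $0\notin\conv{u_1,\dots,u_r}$'' you cannot directly extract a single direction $v$ with $\langle u_i,v\rangle>0$ against some fixed choice from each cone: the separating hyperplane a priori depends on the selection. What you actually need is that if no admissible selection exists then the Minkowski sum $N_{K_1}(p)+\cdots+N_{K_r}(p)$ is a pointed cone, whence its polar $\bigcap_i T_{K_i}(p)$ (intersection of the tangent cones) has nonempty interior; any $v$ in that interior then gives $p+tv\in\bigcap_i\inter{K_i}$ for small $t>0$, contradicting the hypothesis. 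This is the substance you are gesturing at, but the cone-duality step must be made explicit. The paper's one-at-a-time replacement sidesteps all of this machinery, which is exactly why it is shorter and more elementary; your argument, once patched, has the advantage of making the ``$0$ in the convex hull of normals'' picture transparent.
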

\begin{proof}[Proof of Lemma~\ref{lem:barany}]
First, consider the compact convex set $K_1$ and the closed convex set $\cap_{i=2}^r K_i$. By a standard separation theorem, there is a closed half-space $D_1$ containing $p$ on its boundary with $\inter{K_1}\subset \inter{D_1}$ and $\inter{D_1}\bigcap \cap_{i=2}^r K_i=\emptyset$. We replace $K_1$ with $D_1$ and apply the same argument for the compact convex set $K_2$ and the closed convex set $D_1\bigcap \cap_{i=3}^r K_i$ to obtain $D_2$. 
Next, we apply the same argument for the compact convex set $K_3$ and the closed convex set $(D_1\cap D_2)\bigcap \cap_{i=4}^r K_i$ to obtain $D_3$. Continuing in this manner, we finally apply the same argument for the compact convex set $K_r$ and the closed convex set $\cap_{i=1}^{r-1} D_i$ to obtain $D_r$. As is easily verified, we arrive at a desired set of half-spaces.
\end{proof}

The following lemma is a simple exercise nevertheless, we sketch the proof.
\begin{lemma}[Non-overlapping dilates of a simplex]\label{lem:simplexparts}
 Let $v_1,\dots,v_{k+1}$ denote the vertices of the simplex $\Delta_k$ in $\R^k$. Then for any $\lambda_1,\ldots \lambda_{k+1}\in(0,1)$ with $\lambda_1+\ldots +\lambda_{k+1}=k$, we have 
\[
\mathop{\cap}_{j=1}^{k+1} D_{v_j,\lambda_j}(\inter{ \Delta_k}) = \emptyset;
\]
\end{lemma}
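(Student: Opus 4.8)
The plan is to prove Lemma~\ref{lem:simplexparts} by explicit coordinate computation, exploiting the fact that the standard simplex is exactly the slice of the positive orthant cut out by the hyperplane $\sum_i x_i = 1$. First I would place $\Delta_k$ in its most convenient form, taking $v_1,\dots,v_{k+1}$ to be the standard basis vectors $e_1,\dots,e_{k+1}$ in $\R^{k+1}$, so that $\Delta_k = \{x \st \sum_i x_i = 1,\ x_i \geq 0\}$ and $\inter{\Delta_k}$ (the relative interior) is the same set with strict inequalities $x_i > 0$. Working one dimension up is cleaner than working intrinsically in $\R^k$, since the dilation $D_{v_j,\lambda_j}$ acts on coordinates in a transparent way, and the affine span is preserved by all these maps so the computation is legitimate.

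The key step is to compute the $j^{\mathrm{th}}$ coordinate of a point in $D_{v_j,\lambda_j}(\inter{\Delta_k})$. If $y \in \inter{\Delta_k}$, then $D_{v_j,\lambda_j}(y) = (1-\lambda_j)e_j + \lambda_j y$; reading off the $j^{\mathrm{th}}$ coordinate gives $(1-\lambda_j) + \lambda_j y_j$, and since $y_j < 1$ we get the strict bound that this coordinate is strictly less than $1 - \lambda_j + \lambda_j = 1$, while it is clearly positive. More precisely, for a point $z$ to lie in the intersection $\cap_{j=1}^{k+1} D_{v_j,\lambda_j}(\inter{\Delta_k})$, its $j^{\mathrm{th}}$ coordinate must satisfy $z_j = (1-\lambda_j) + \lambda_j y^{(j)}_j$ for some $y^{(j)} \in \inter{\Delta_k}$, whence $z_j > 1 - \lambda_j$ for every $j$. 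Summing over $j$ and using $\sum_j z_j = 1$ (because every $D_{v_j,\lambda_j}$ maps into the affine hull $\sum_i x_i = 1$), I would obtain the chain
\[
1 = \sum_{j=1}^{k+1} z_j > \sum_{j=1}^{k+1}(1-\lambda_j) = (k+1) - \sum_{j=1}^{k+1}\lambda_j = (k+1) - k = 1,
\]
a contradiction. Hence no such $z$ exists and the intersection is empty.

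The only point requiring a little care—and the closest thing to an obstacle—is verifying that each dilation genuinely maps $\inter{\Delta_k}$ into the affine hyperplane $\{\sum_i x_i = 1\}$, so that summing the coordinate inequalities against the constraint $\sum_j z_j = 1$ is valid; this holds because a dilation centered at a point of an affine subspace preserves that subspace, and both $v_j$ and $\inter{\Delta_k}$ lie in the hyperplane. I would also note that the strict inequalities are essential: they come precisely from taking the \emph{relative} interior (using $y_j < 1$ rather than $y_j \leq 1$), and it is exactly the strictness that forces the contradiction rather than mere equality, matching the borderline value $\sum_j \lambda_j = k$. Everything else is the routine arithmetic displayed above, so the proof is short once the coordinates are set up.
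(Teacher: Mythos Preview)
Your proof is correct and essentially the same as the paper's: the paper works with barycentric coordinates $\mu_j$ and observes that $D_{v_j,\lambda_j}(\inter{\Delta_k})=\{x\in\inter{\Delta_k}:\mu_j>1-\lambda_j\}$, which is exactly your inequality $z_j>1-\lambda_j$ since in your embedding the coordinates $z_j$ \emph{are} the barycentric coordinates. One minor slip in your closing paragraph: the strictness of $z_j>1-\lambda_j$ comes from $y^{(j)}_j>0$, not from $y_j<1$.
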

\begin{proof}
 Any point $x$ in $\R^k$ can be written as $x=\sum_{j=1}^{k+1}\mu_j v_j$ with uniquely determined real coefficients $\mu_j$ such that $\sum_{j=1}^{k+1}\mu_j = 1$. These $\mu_j$ are called the barycentric coordinates of $x$ with respect to the $v_j$. Observing
 \[
  D_{v_j,\lambda_j}(\inter{ \Delta_k})=\{x\in\inter{ \Delta_k}\st \mu_j>1-\lambda_j\}
 \]
and $\sum_{j=1}^{k+1} (1-\lambda_j) = 1$ yields the proof.
\end{proof}

\begin{proof}[Proof of Proposition~\ref{prop:antipodequivshrinking}]
We may assume that $S$ has non-empty interior, otherwise, we replace $\Red$ by $\aff{S}$.

In order to prove that \eqref{item:antipod} implies \eqref{item:separatedall}, assume that there is an affine map $\phi$ from $S$ to $\Delta_k$ mapping $q_1,\ldots q_{k+1}$ onto the vertices of $\Delta_k$. Let $\lambda_1,\ldots \lambda_{k+1}\in(0,1)$ with $\lambda_1+\ldots +\lambda_{k+1}=k$, and observe that
\[
\mathop{\cap}_{j=1}^{k+1} \phi\left(D_{q_j,\lambda_j}(\relint{S})\right) = 
\mathop{\cap}_{j=1}^{k+1} D_{\phi(q_j),\lambda_j}(\relint{\phi(S)}) = 
\]\[
\mathop{\cap}_{j=1}^{k+1} D_{\phi(q_j),\lambda_j}(\relint{\Delta_k}) = 
\emptyset,
\] 
where, in the last equation, we used Lemma~\ref{lem:simplexparts}. Thus, \eqref{item:separatedall} holds.

Clearly, \eqref{item:separatedall} implies \eqref{item:separated}. 
In order to see that \eqref{item:separated} implies \eqref{item:antipod}, we first observe that the point $p\coloneqq \sum_{i=1}^{k+1}(1-\lambda_i)q_i$ is in the intersection $\cap_{i=1}^{k+1} D_{q_i,\lambda_i}(S)$. 
Indeed, to see that $p\in D_{q_1,\lambda_1}(S)$, we write
\[p=(1-\lambda_1)q_1+\lambda_1\sum_{i=2}^{k+1}\frac{1-\lambda_i}{\sum_{j=2}^{k+1}(1-\lambda_j)}q_i,
\]
using $\lambda_1=\sum_{i=2}^{k+1}(1-\lambda_i)$. Similarly, $p\in D_{q_i,\lambda_i}(S)$ for all $i\in[k+1]$.

Applying Lemma~\ref{lem:barany} yields closed half-spaces $D_i$ $(i\in[k+1])$ whose bounding hyperplanes, denoted by $H_i$, are incident with $p$. Note that for any distinct $i,j\in[k+1]$ we have that $q_i+\lambda_i(q_j-q_i)$ is in $D_{q_i,\lambda_i}(S)$. Moreover, for any $i\in[k+1]$, we have
$p=\sum_{j\in[k+1]\setminus\{i\}]} \frac{1-\lambda_j}{\lambda_i}(q_i+\lambda_i(q_j-q_i))$ (the reader is invited to check this), and hence $p$ is in the relative interior of the convex hull of the set $A_i\coloneqq\{q_i+\lambda_i(q_j-q_i)\st j\in[k+1]\setminus\{i\}\}$. Since $H_i$ is a support hyperplane of $D_{\lambda_i,q_i}(S)$ at $p$, and $A_i\subseteq D_{\lambda_i,q_i}(S)$, it follows that $A_i\subset H_i$.

It follows that the hyperplane $D_{q_i,1/\lambda_i}(H_i)$ supports the set $S$ at the points $q_j$ with $j\in [k+1]\setminus\{i\}$. Hence, the intersection $\cap_{i=1}^{k+1} D_{q_i,1/\lambda_i}(D_i)$ of $k+1$ closed half-spaces contains $S$, and $\aff{q_1,\dots,q_{k+1}}\bigcap\cap_{i=1}^{k+1} D_{q_i,1/\lambda_i}(D_i)$ is a $k$-dimensional simplex which is equal to $\conv{q_1,\dots,q_{k+1}}$. Moreover, since $\cap_{i=1}^{k+1} \inter{D_i}=\emptyset$ and $\cap_{i=1}^{k+1} D_i$ contains $p$, and hence is not empty, the normal vectors of the hyperplanes $H_i$ are linearly dependent. Thus, the dimension of the affine subspace $\cap_{i=1}^{k+1} D_{q_i,1/\lambda_i}(H_i)$ is $d-k$. Finally the projection of $\Red$ onto $\aff{q_1,\dots,q_{k+1}}$ along the linear subspace parallel to $\cap_{i=1}^{k+1} D_{q_i,1/\lambda_i}(H_i)$ is the desired affine map satisfying \eqref{item:antipod}.
\end{proof}

\section{Neighborly polytopes}\label{sec:neighborly}

Fixing a $k\in[d]$, we say that a convex polytope $P$ in $\Red$ is \emph{$k$-neighborly}, if for any $k$-element subset $A$ of the set of vertices of $P$, the convex set $\conv{A}$ is a face of $P$. Members of this class of polytopes possess a
very rich combinatorial structure, Gr\"unbum's \cite[Chapter~7]{Gru03} provides an excellent introduction to this theory.

\begin{thrm}[$k$-antipodal sets are $k$-neighborly polytopes]\label{thm:neighborly}
Let $S$ be an antipodal set of rank $k$ in $\Red$ for some $k\in[d]$. Then $\conv{S}$ is a $k$-neighborly convex polytope.
\end{thrm}

The cube ($[-1,1]^d$) is rank $1$ antipodal, but clearly not 2-neighborly if $d\geq 2$, so the theorem in this sense is sharp.

\begin{proof}
 Let $A$ be a $k$-element subset of $S$. Denote by $F$ the smallest face (with respect to inclusion) of $P=\conv{S}$ that contains $A$. If $A$ is the vertex set of $F$, we are done, so suppose this is not the case.

 Note that since $S$ is antipodal of rank $k$, it is also antipodal of rank $k-1$.
 Clearly, $\dim{F}>\dim{A}$, as otherwise $\aff{A}$ would contain a vertex of $P$ not in $A$, clearly contradicting the assumption that there is an affine map of $\Red$ to the simplex $\Delta_{k-1}$ mapping $A$ to bijectively to the vertices of $\Delta_{k-1}$.

 Set $t=\sum_{a\in A} a/k$, that is, the center of mass of the
$(k-1)$-dimensional simplex $\conv{A}$. We claim that $t$ is in the relative
interior of $F$, that is, it is in the interior of $F$ with respect to
$\aff{F}$. Indeed, otherwise $F$ had a supporting hyperplane $L$ within $\aff{F}$ containing $t$, which would in turn, clearly, contain $A$, and hence, $L\cap F$ would be a face of $P$ containing $A$ which is strictly contained in $F$, contradicting the minimality of $F$.

Next, let $q$ be any vertex of $F$ outside of $\aff{A}$. The point $t$ is on the relative boundary of the $k$-dimensional simplex $\conv{A\cup\{q\}}$, but is in the relative interior of $F$. Clearly, there is no affine map from $F$ to $\Delta_k$ that maps $A\cup\{q\}$ bijectively to the vertices of $\Delta_k$, a contradiction completing the proof of Theorem~\ref{thm:neighborly}.
\end{proof}

Theorem~\ref{thm:neighborlybound} now follows immediately from the fact that any $k$-neighborly polytope in $\Red$ with $k>d/2$ contains at most $d+1$ vertices, that is, it must be a simplex, cf. \cite[Section~7.1]{Gru03}.

\section{Hash functions yield rank \texorpdfstring{$k$}{k} antipodal sets}\label{sec:combinatorics}

\begin{proof}[Proof of Theorem~\ref{thm:combimpliesgeo}]
 Let $S\subset \mathbb R^{d_0}$ be a rank  $k$ antipodal set of cardinality $|S|=b$,
and $W_m$ be a perfect $(b,k+1)$-hash code of words of length $m$, with cardinality $|W_m|=N(b,k+1,m)$. 
Consider the $m$-th Cartesian power of $S_0=\conv{S}$, $S=\overbrace{S_0\times\dots\times S_0}^{m \text{ times}}$ as a convex subset of $\R^{md_0}$.
Clearly, $S$ is a polytope whose vertices can be encoded as words of length $m$ on the alphabet $[b]$. That is, $W_m$ may be considered as a subset $X_{W_m}$ of the vertices of $S$. It is easy to see that by the definition of a perfect hash code, $X_{W_m}$ is antipodal of rank $k$ with respect to $S$. In fact, the projections needed to show this belong to the set $\{P_i\st i\in[m]\}$, where $P_i$ is the projection of $\Red=\left(\R^{d_0}\right)^m$ to its $i$-th component $\R^{d_0}$.
\end{proof}

\section{Closing Remarks}

\subsection{The gap between geometry and combinatorics}

One might wonder if the construction in the proof of Theorem~\ref{thm:combimpliesgeo} can lead to a sequence of examples approaching, in an asymptotic sense, the upper bound on $A(d,k)$. More specifically, whether for any fixed $k$, there is a \emph{gap} between the bound
\[
\limsup_{d\longrightarrow\infty} \frac{1}{d}\log A(d,k) \leq \log\frac{k+1}{k}
\]
given by Theorem~\ref{thm:upperbound}, and the quantity
\begin{equation}\label{eq:limsuplog}
\limsup_{m\to \infty}\frac{1}{d}\log(|X_{W_m}|),    
\end{equation}
where $X_{W_m}$ is the rank $k$ antipodal set obtained in the proof of Theorem~\ref{thm:combimpliesgeo}.

We now show that for $k>2$, in fact, \emph{there is a gap independent of the starting configuration.} That is, using prefect hashes to obtain high dimensional rank $k$ antipodal sets from a fixed $d_0$-dimensional rank $k$ antipodal example, one cannot get close even asymptotically to the geometric bound on $A(d,k)$ given in Theorem~\ref{thm:upperbound}.

Indeed, by our result, we know that for the ``starting configuration'' $S$, we have the inequality
\begin{equation}\label{eq:boundonb}
b \leq k \left(\frac{k+1}{k}\right)^{d_0}.    
\end{equation}
Moreover, a trivial counting argument c.f. \cite{KoMa88} gives an upper bound on the cardinality of a perfect hash code by which
\begin{equation}\label{eq:boundonN}
N(b,k+1,m)\leq k \left(\frac{b}{k}\right)^{m}.
\end{equation}
If we assume equality both in \eqref{eq:boundonb} and \eqref{eq:boundonN}, then after substituting $d=d_0 m$, we obtain that our rank $k$ antipodal set $X_{W_m}$ constructed in the proof of Theorem~\ref{thm:combimpliesgeo} has cardinality $|X_{W_m}|=|W_m|=k((k+1)/k)^d$, which is again precisely the upper bound of Theorem~\ref{thm:upperbound}.
On the other hand, $k\left(\frac{k+1}{k}\right)^{d_0}$ is never an integer (for $d_0,k>1$), so it cannot be equal to $b$, from which it is easy to conclude that for any fixed starting set, the quantity \eqref{eq:limsuplog} is strictly smaller than $\log \frac{k+1}{k}$. Nevertheless, it does not rule out the possibility of \eqref{eq:limsuplog} getting arbitrarily close to $\log \frac{k+1}{k}$ as the dimension $d_0$ of the starting configuration increases.

Luckily, in general we have  better bounds than \eqref{eq:boundonN} for the asymptotic of the size of a perfect hash code; see for example \cite{KoMa88, FC84, DCD22}.
Using the quantity $R$ introduced in \eqref{eq:R}, we have 
\[
\limsup_{m\to \infty}\frac{1}{d}\log(|X_{W_m}|) = \frac{1}{d_0}R(b,k+1)
\leq \frac{1}{d_0}R\left(k \left(\frac{k+1}{k}\right)^{d_0}\!,\, k+1\right).
\]
For $k>2$, and $d_0$ sufficiently large, using \cite[Lemma~1]{DCD22} for upper estimating $R$, it can be shown that \eqref{eq:limsuplog} may be upper bounded by a value depending only on $k$ (and not on $d_0$), which is strictly smaller than $\log \frac{k+1}{k}$. The existence of a gap then follows, since for each $k$, there is only a finite number of small $d_0$.



\subsection{Weakening joint perfect distinguishablity in GPT}

If the states $s_1,s_2,\ldots, s_{k+1}\in S$ in a GPT model are not jointly perfectly distinguishable, one may minimize the sum of the error probabilities
$$
P_{e,M}(s_1,s_2,\ldots, s_{k+1})=\sum_{j=1}^{k+1}(1-M_j(s_j))
$$
and investigate how its infimum $P_e^*(s_1,\ldots, s_{k+1})$ over all possible
affine maps $M:S\to \Delta_k$ behaves. For example, it is interesting to find upper bounds on $P_e^*(s_1,\ldots, s_{k+1})$ in terms of the pairwise quantities $P_e^*(s_j,s_{\ell})$, $j\neq\ell$. If $S=\Delta_n$ (ie., in classical probability), then evidently $P_e^*(s_1,\ldots, s_{k+1})\leq \sum_{j<\ell}P_e^*(s_j,s_\ell)$. On the other hand, it is false in general when $S$ is the set of
density operators on $\mathbb C_n$ for some $n$ (i.e.\ in the quantum probability setting). Nevertheless, by \cite[equation (10)]{audenaert_milan} in this latter case, one still has the (somewhat weaker) bound
$P_e^*(s_1,\ldots, s_{k+1})\leq 2\sqrt{2}\sum_{j<\ell}\sqrt{P_e^*(s_j,s_\ell)}$.

Interestingly, both in the quantum and in the classical case (as one might deduce from the upper bounds on $P_e^*$ in the paragraph above), pairwise perfect distinguishability (or, in the language of convex geometry: pairwise antipodality) implies joint distinguishability. In general, when $S$ is an arbitrary convex set, this is not so; in fact for any collection of subsets $\mathcal A$ of $[k+1]$ forming an \emph{independence system}, there exists a convex polytope $S\subset \Red$ and $s_1,\ldots s_{k+1}\in S$ such that for an $H\subset [k+1]$, the collection $(s_j)_{j\in H}$ is jointly antipodal if and only if $H\in \mathcal A$, see the details in \cite{weiner2023}.

\subsection*{Acknowledgment} We thank Viktor Harangi for fruitful discussions on the subject.

\bibliographystyle{amsalpha} 
\bibliography{biblio}
\end{document}